\algnewcommand{\LeftComment}[1]{\Statex \(\triangleright\) #1}
\newtheorem{theo}{\indent Theorem}[section]
\newtheorem{prop}[theo]{\indent Proposition}
\newtheorem{rem}[theo]{\indent Remark}
\newtheorem{lem}[theo]{\indent Lemma}
\newtheorem{defin}[theo]{\indent Definition}
\newtheorem{cor}[theo]{\indent Corollary}
\newtheorem{ass}[theo]{\indent Assumption}
\newtheorem{algo}{Algorithm}
\def\D{{\mathbb D}}
\def \E{I\!\!E}
\newcommand{\R}{\mathbb {R}}
\newlength{\breite}
\title[Hawkes]{Perfect simulation for interacting Hawkes processes with variable length memory}
\date{}
\author{Branda P. I. Goncalves and Paul Gresland}
\address{B. P. I. Goncalves: Universit\'e Paris Nanterre, 200 avenue de la république, 92001 Nanterre Cedex, France.}
\email{b.goncalves@parisnanterre.fr}
\address{P. Gresland: Universit\'e Côte d'Azur, 28 Avenue de Valrose, 06103 Nice CEDEX 2, France}
\email{paul.gresland@gmail.com}
\begin{document}
\maketitle
\def\abstractname{Abstract}
\begin{abstract}

We consider a nonlinear multivariate Hawkes process having a variable length memory which allows to describe the activity of a neuronal network by its membrane potential.  We propose a graphical construction of the process and we construct, by means of a perfect simulation algorithm, a stationary version of the process.
By making the hypothesis that the spiking rate $\beta_i$ of the neuron $i \in I $ is bounded, we construct an algorithm based on a priori realizations of the Poisson process $(M^i, i \in I)$. We show that there exists a critical value $\delta_c$ such that if $\underline{\delta} > \delta_c$ (where $\underline{\delta}= \inf_i{\delta_i}$ with $\delta_i = \frac{\beta_{i* }}{\beta^*_i-\beta_{i*}}$ ) the process is ergodic.

\end{abstract}

{\it Key words} : Nonlinear multivariate Hawkes process, process with variable length memory, neuronal network, membrane potential, perfect simulation.
\\

{\it MSC 2000}  : 60 J 55, 60 J 35, 60 F 10, 62 M 05

\section{Introduction}

We consider nonlinear multivariate Hawkes processes (which are close to the process introduced by Hawkes \cite{Hawkes}) with transition probabilities that depend in the last spiking time in the past and not the whole past, that is, Hawkes processes with variable length memory. 

Hawkes processes are popular, as they allow to model in a relevant way phenomena in various domains such as finance, seismology, neuroscience. Multivariate Hawkes processes have long been studied in probability theory see,  Daley and Vere-Jones \cite{DV-J}, Brémaud and Massoulié \cite{BM}, Massoulié \cite{Mas}, etc...

The model considered in this paper is an extension of the perfect simulation part of model presented in Goncalves \cite{Gon}.
We use here, Hawkes processes having a variable length memory to describe the activity of each neuron $i \in I$, where  $I \subset \mathbb{Z}$ is a subset of $\mathbb{Z}$. In this model, each neuron $i\in I$ sends its spike at the random intensity $\lambda_t^i= \beta_i(X_{t_-}^i)$ where $\beta_i$ is the jump rate function of neuron $i$ and $ X_t^{i}$ is the membrane potential of neuron $i$ at time $t$. \\
 
In the literature, graphical methods (based on the perfect simulation algorithm) are used to construct processes of infinite size in their stationary regime, see Comets et al. \cite{CFF} and Galves et al. \cite{GLO}.
In Comets et al. \cite{CFF}, the authors consider processes with transition probabilities that depend on the whole past history and present a perfect simulation algorithm for stationary processes indexed by $\mathbb{Z}$ with summable memory decay. The authors generalize in Delattre et al. \cite{DFH}, the construction of multivariate Hawkes processes to a possibly infinite network of counting processes on a directed graph without giving an explicit construction of the process.
In the work of Ferrari et al. \cite{FGGL}, considering an infinite system of interacting point processes with memory of variable length, the authors investigated the conditions for the existence of a phase transition using the classical contour technique, based on the classical work of Griffeath \cite{Gri} on a contact process. This condition of existence of a phase transition is also given in Goncalves \cite{Gon}, where the neural network is purely inhibitory and the neurons are represented by their inhibition state. We construct in \cite{Gon} a perfect simulation algorithm to show the recurrence of the process under certain conditions in the Markovian case.

Contrary to the models studied in Ferrari et al. \cite{FGGL} and Goncalves \cite{Gon} where we remain in a Markovian framework, we are going to apply the same graphical construction techniques (using perfect simulation algorithms) on nonlinear multivariate Hawkes processes in their stationary regime. \\

The papers is organized as follows. In Section 2, we describe the model and establish the assumptions we need. In Section 3, we present the construction of the perfect simulation algorithm and main result, Theorem \ref{theo:val critique},  which gives the condition under which the algorithm stops after a finite number of steps. An example applied to the model is presented in Section 4.

\section{The model}

\subsection{ Description of the model }

In our paper, we consider a countable set of  interacting neurons $I$.
For all $i \in I,$ let $N^i$ be the counting process of successive spikes of neuron $i$, that is, for all $0< s < t < \infty, \ \ N^i(]s,t]) $ counts the number of spikes of neuron $i$ during the interval $]s,t].$
We define for all $i \in I,$ $X^{i}_t$ as describing the  membrane potential (that is, electrical potential difference between the inside and the outside of the cell) of the $i-$th neuron. $X^{i}_t$ can thus be represented as a solution of the equation
\begin{equation}\label{eq:1}
X_t^{i}= \sum_j \int_{]L_t^{i},t]}  h_{ij}(t-s) dN^j_s, \ \ i\in I
 \end{equation} 
where  $ h_{ij} (t) $ 
is a family of synaptic weight functions modeling the influence of neuron $j$ on neuron $i$ and
$$ L_{t}^{i}=\sup \left\{s<t: \Delta N^{i}_s = 1\right\} $$
is the last spiking time before time $t$ of neuron $i$ and $\Delta N_s:= N_s-N_{s-}$. 
We can interpret our model as follows: when a neuron $j$ spikes, it sends a synaptic weight to neurons $i$ at time $s \in (L_t^i, t]$, this spike modifies the membrane potential $ X_t^i$ of neurons $i$ that are in the neighborhood of neuron $j$ and the membrane potential of neuron $j$ is reset to $0$ at time $s$. \\

\vspace{0.2cm}

The process $(N_t^1,\cdots, N_t^N)_{t>0}$ is then a Hawkes multivariate process with intensity 
\begin{equation} \label{eq:intensity}
\lambda_t^i= \beta_i \left(\sum_{j\to i} \int_{]L_t^{i},t]}  h_{ij}(t-s) dN^j_s\right), \ \ \forall i \in I, 
\end{equation}
where $\beta_{i}: \mathbb{R}_+ \to \mathbb{R}_{+}$ is the spiking rate function. This intensity process given by \eqref{eq:intensity} is close to the typical form of the intensity of a multivariate nonlinear Hawkes process. The only difference is that here we are only interested in the last spiking time in the past and not the whole past.  
\vspace{0.2cm}

%
%
 
\begin{ass} \label{eq:beta}
The spiking rate functions $\beta_{i}: \mathbb{R}_+ \to \mathbb{R}_{+}$ are decreasing and bounded on $\mathbb{R}_+$, that is, for each $x >0,$ for all $i \in I, $ 
\begin{equation*} 
0 < \beta_{i*} < \beta_i(x) < \beta_i^{*} < \infty . 
\end{equation*} 

\end{ass}

\begin{rem}
The formula (\ref{eq:1}) is well-posed in the sense that there is non explosion of the process.   Since $ \beta_i ( X_s^{i} ) \leq \beta_i(0) $ for all $i$ we deduce that $ \int_0^t \lambda_s^i ds < \infty $ whence the non explosion, that is, almost surely, the process has only a finite number of jumps within each finite time interval.
 \end{rem}

%

\vspace{0.2cm}

For each neuron $i$, let  $\mathcal{V}_{. \to i } = \{ j \in I: W_{j \to i} \neq 0\} $ be the incoming neighborhood of neuron $i$, that is, the set of neurons that have a direct influence on neuron $i$  and $\mathcal{V}_{i \to . } = \{ j: W_{i \to j} \neq 0\} $ the out-coming neighborhood, that is, the set of all neurons that are directly influenced by neuron $i$  (see Comets et al. \cite{CFF},  Galves and L\"{o}cherbach \cite{GL} and Goncalves \cite{Gon}). \vspace{0.2cm}


%
%

\section{Perfect simulation}

In this section, we want to construct the stationary nonlinear Hawkes process by a perfect simulation procedure based on an a priori realization of a Poisson process $(M^i, i \in I)$ of intensity $\beta^*_i$. 
We use the idea developed in Hodara and Löcherbach \cite{HL} about a dominating Poisson random measure.
For this, we suppose assumption \eqref{eq:beta} holds and we introduce a sequence of i.i.d Poisson random measure $M^i(dt, dz)$ on $\mathbb{R}_+ \times \mathbb{R}_+$ of intensity $dt dz$ dominating the process $(N^i, i\in I)$.

\begin{defin} \label{def:hawkes}
A family $\left(N^{i}, i \in I\right)$ of random point measures defined on a probability space is said to be a Hawkes process with variable length memory  with parameters $(\beta, h)$ if almost surely, for all $i \in I$, 
\begin{equation} \label{def:N}
N^{i}_t= \int_{]0,t]} \int_{[0,\infty[} 1_{\left\{z \leq  {\beta_i}\left(\sum_{j} \left(\int_{]L_{t}^{i}, t]} h_{ij} (t-s) d N_{s}^{j}\right)\right)\right\}} M^i(dt, d z)
\end{equation}
\end{defin}
According to Brémaud and Massoulié \cite{BM}, see also Proposition 3 of Delattre et al. \cite{DFH} and Hodara and Löcherbach \cite{HL} a Hawkes process according to Definition \ref{def:hawkes} is a Hawkes process according to \eqref{eq:intensity} and vice versa.

\vspace{0.2cm}

Formula \eqref{def:N} implies that we can construct the process $\left(N^{i}, i \in I\right)$ by a thinning procedure applied to the a priori family of dominating Poisson random measures $M^{i}(dt, dz)$ having intensity $ d t dz$. Since $N^{i}$ is a simple point measure, it is enough to define it through the times of its atoms. Each atom of $N^{i}$ must also be an atom of $M^{i}$ since $N^{i} \ll M^{i}$. 
\vspace{0.2cm}

 We decompose the Poisson process $M^i $ of intensity $\beta^*_i$ as $$ M^i= M^{i,s}+ M^{i,p}, $$ where $M^{i,s}$ and  $M^{i,p}$ are independent Poisson processes with respective intensities $\beta_{i*} $ and $\beta_i^*-\beta_{i*}$. \\
Notice that, all jumps time of process $M^{i,s} $ are the same that the process $N^i$ conditionally on the processes $ M^{i,s}$ and $M^{i,p}$. We call them the {\it sure jumps.} 
They appear at a jump time $T^{i,s}$ of $M^{i,s}$ with probability $p_0 := \frac{\beta_{i*}}{\beta^*_i}.$ Moreover, any jump time $T^{i,p} $ of $M^{i,p} $ will be a jump time of $N^i$ with probability 
$$  p_i:= \frac{\beta_i \left(\sum_{j\to i} \sum_{T_n^j \in]L^i_t,t]}  h_{ij}(t-T_n^j) \right) -\beta_{i*}}{\beta_i^*-\beta_{i*}}, $$ 
and  we have to decide for each neuron $i\in I$ and each time $T^{i,p} $ whether this jump is accepted or not. \\

To construct this stationary nonlinear Hawkes process, we fix a neuron $i \in I $ and in what follows we are interested in finding the  membrane potential of neuron $i$ in its stationary regime. 
The variable $T:=(T_1, T_2, \cdots)$ which is the time vector will be used to write the perfect simulation algorithm. 

\begin{defin}
Let $i\in I$ and $t\in \mathbb{R}_+$. The clan of ancestors $C_t^i$ of neuron $i$ at time $t$ is the set of all the neurons $j\in I $ that might influence the neuron $i$. It evolves in time by successive jumps.
\end{defin}

We define $$ T_{next}= \sup\{ s < t : X_s^{i}=0 | X_t^{i}=0 \} \text{ and }  \partial_{ext}(C_t^i) = \{ j \notin C_t^i : \exists k\in C_t^i,  h_{kj} >0 \} $$ where $T_{next}$ is the next jump time in the clan of ancestors of neuron $i$ after time $t$ and $\partial_{ext}(C_t^i)$ is the set of neurons not belonging to the clan of ancestors  of neuron $i $ but having an interaction with at least one neuron in the clan of ancestors  of neuron $i.$\\

\begin{algo}{Backward procedure}
\begin{enumerate}
\item We simulate , for each $ j \in I, \;\; M^{j,s} \text{ and } M^{j,p} $ two Poisson processes with respective intensities $\beta_{i*} $ and $\beta_i^*-\beta_{i*}. $ The jump times of $ M^{j,s} $ and $M^{j,p} $ are respectively $T_n^{j,s} $ and $T_n^{j,p} $ for the neuron $j $ after $n $ jumps.\\
\item Initialize the family $ \mathcal{V}_{. \to i} $ of non empty incoming neighborhoods of the neuron $i$\\
\item Initialize  $C_{T_0}^i =\{ i \} $ the clan of ancestors 
of neuron $i$ at time $T_0. $   For all time $t <T_0,$ we let $C_t^i $ the clan of ancestors of neuron $i$ at time $t$\\ 
\item  We set $V(C_t^i) := \cup_{j \in C_t^i} \mathcal{V}_{. \to j} $. While $ | C_t^i | >0 $ (where $| C_t^i| $ denotes the cardinality of $C_t^i$) do : \\

 -Determine the next jump time $ T_{next} < t $ in the clan of ancestors of neuron $i $ at time $T_{next} $ and in $ \partial_{ext} (clan) $, the correspondant neuron $j $ and the nature of jump \vspace{0.1cm}
 
- If neuron $j \in C_t^i $ and the jump is sure, i.e, $ T_{next} = T^{j,s},$  then,  $C_{T_{next}}^i = C_t^i \setminus \{i\} $ \vspace{0.1cm}
 
- If $j \in  C_t^i $ and the jump is possible, i.e, $ T_{next} = T^{j,p},$ then, $C_{T_{next}}^i = C_t^i $ \vspace{0.1cm}
 
- If $j \in V(C_t^i) $ and the jump is sure,  i.e, $ T_{next} = T^{j,s},$ then, $C_{T_{next}}^i = C_t^i $ \vspace{0.1cm}
 
- If $j \in V(C_t^i) $ and the jump is possible,  i.e, $ T_{next} = T^{j,p},$ then, $C_{T_{next}}^i = C_t^i \cup \{j\} $ \vspace{0.1cm}
 
- We update $ t \leftarrow T_{next}$ \vspace{0.1cm}

\item[] end While.
\end{enumerate}
\end{algo}

We stop this Algorithm at time $T^i_{stop}:= \inf\{t : C_t^i = \emptyset\}$ which is the first time when the clan of ancestors of neuron $i$ is empty. Indeed, the whole procedure makes sense only if $ T_{stop} ^i < \infty $ almost surely.
\vspace{0.2cm}

In the following we will write a forward procedure of the process in the case $ T_{stop}^i < \infty .$  \\

For this, we define: $$ N_{stop}^i = \inf\{ n >0: C_{T_n}^i= \emptyset \}, \    \bar{\mathcal{C}}^i = \cup_{n=0}^{{N}^i_{stop}} C_{T_n}^i  $$ 
where $ N_{stop}^i $ is the number of steps of the backward procedure and $ \bar{\mathcal{C}}^i  $ is the union of all clans of ancestors up to $N^i_{stop}$.

We denote by 
$$\mathcal{D}^i:=  \bar{\mathcal{C}}^i  \cup \partial_{ext}(C_t^i)  $$ 
the set of neurons which belong to a clan of ancestors of neuron $i $ at a time $t$ or to its neighborhood.

In this algorithm, we will rely on the a priori realizations of the processes  $  M_t^{i,s} \text{ and } M_t^{i,p}.  $ 
We can realize the acceptance/rejection procedure of the elements in the clans of ancestors already determined in the first algorithm. \\
We start with the positions for which the jumps are sure. Then, we keep in mind all the jumps that we accept regardless of everything. During the algorithm, we will gradually update all the sure positions of the jumps.

\begin{algo}{Forward procedure}
\begin{enumerate}

\item We determine the chronological list of the different jump times of the processes $  M_t^{i,s} \text{ and } M_t^{i,p} $ from $T_0$ to the last time which makes the clan empty.  

- For each of these jump times, we indicate the associated neuron and the nature of the jump, i.e., at time $T_n$ we have $T_n = T_n^{j,l} $ where $l \in \{ s,p\}$ and $j$ is the associated neuron. 

- If the jump is sure, i.e, $T_n = T_n^{j,s} $,  the position $x^j_{T_n}$ of the neuron $ j $ associated with this jump time is 0. \\

\item We set $m \gg 1$. While $m >0 $ do 

- Let $m $ be the rank of the last possible jump time $T_m $ of $\mathcal{D}^i$  in the chronology of jump times. 

- Let $k $ be the neuron associated with this jump, i.e.,  $T_m = T_m^{k,p}$.\\

\item[] We determine the rank $r $ of the last sure jump time $T_r =T_r^{k,s} <  T_m^{k,p} = T_m $ of $k$ in the chronology of jump times.  The state $x_{T_m}^k$ of neuron $k$ at time $T_m$ is determined as follows:

- We set $ x_{T_{r}}^k = 0$. 
$$ A^k_m : = \sum_{j}  \sum \limits_{\substack{ T_{m+l}^j \in (T_r , T_m), \\  l  \in \{1 ,\cdots, r-m-1\}}} h_{kj}(T_{m} - T_{m+l}^j )  \mathds{1}_{\{\text{ $ T_{m+l}^j = T_{m+l}^{j,s}$} \} } 
$$

- We determine if the occurence is effective or not for jump $k$ at time $T_m$ thanks to $A^k$. We have:
$$ x_{T_m}^k= A^k_m \mathcal{B}(1-p_k),  \text{ with } p_k= \frac{\beta_k(A^k_m) - \beta_{k*}}{\beta_k^*-\beta_{k*}}  $$
 where $\mathcal{B}(p)$ is the Bernoulli distribution of parameter $p$.
 Then, if the jump is effective, $ x_{T_m}^k= 0. $
 Repeat  step (7) of the procedure. 
 
\item[] end While.

\begin{rem}
After this step, we know the exact nature of all jumps.
\end{rem}

\item Determine for neuron $i$ its first sure jump time $T_n= T_n^{i,s}$ where $n$ is the rank of this time in the chronology of jump times.\\

\item The state of neuron $i$ at time $T_0 $ is determined by:  
$$ x^i_{T_0} = \sum_{j} \sum\limits_{\substack{T_K^j \in ( T_n, T_0), \\ K<n}} h_{ij}(T_0-T_K^j)   $$

\begin{rem}
The last value determined is the potential of neuron $i$ at the time $T_0$ in its stationary state.
\end{rem}
\end{enumerate}
\end{algo}

\begin{rem}
This algorithm 2 is inspired by \cite{GGLO} (page 20-21) which shows that if we find the potential of a fixed neuron $i \in \mathbb{Z}$  at time $T_0$, it is necessarily the potential of the neuron $i$ in its stationary regime. The algorithm is not a proof in itself, but allows to have an idea of the theoretical distribution of the value of neuron $i$ at time $T_0$ in its stationary regime.
\end{rem} 

\begin{prop} \label{exchange_prop}
By exchangeability argument \footnote{ We say that $N$ particles are exchangeable, if the law of $(X_t^1, \cdots, X_t^N)$ is stable under the action of a coordinate permutation.}, for any neuron $i \in \mathbb{Z}$, the law of the stationary process $\mathbf{P}$ is such that
\begin{equation*}
\mathbf{P}(X_t^{i} = 0) = \mathbf{P}(\text{last jump before time } 0 \text{ of particles } i, i - 1, i+1 \text{ is a jump of } i) = \frac{1}{3}. 
\end{equation*} 
\end{prop}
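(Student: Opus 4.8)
The plan is to read the two equalities in the statement as two separate facts: the first, $\mathbf{P}(X_t^{i}=0)=\mathbf{P}(\text{last jump before }0\text{ among }i-1,i,i+1\text{ is a jump of }i)$, is a pathwise consequence of the representation \eqref{eq:1}, and only the evaluation to $\tfrac13$ uses exchangeability. I would first establish the pathwise characterisation of the event $\{X_t^{i}=0\}$. Fix a trajectory. In \eqref{eq:1} each term $\int_{]L_t^{i},t]}h_{ij}(t-s)\,dN^j_s$ is a finite sum of nonnegative numbers, the synaptic weight functions being nonnegative and (for $j\in\mathcal{V}_{\cdot\to i}$) strictly positive on $(0,\infty)$; moreover the $j=i$ term vanishes since $]L_t^{i},t]$ contains no spike of $i$ by definition of $L_t^{i}$. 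Hence $X_t^{i}=0$ if and only if no neuron $j\in\mathcal{V}_{\cdot\to i}$ spikes in $]L_t^{i},t]$, that is, if and only if among the neurons of $\{i\}\cup\mathcal{V}_{\cdot\to i}$ the last spike strictly before $t$ is a spike of $i$ itself (such a last spike exists almost surely by the non-explosion remark). In the nearest-neighbour setting considered here, where $\mathcal{V}_{\cdot\to i}=\{i-1,i+1\}$, this says precisely that the last jump before $t$ among particles $i-1,i,i+1$ is a jump of $i$; by stationarity its probability does not depend on $t$, and taking $t=0$ gives the first equality.

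Next I would evaluate the probability of ``the last of $i-1,i,i+1$ to spike before $0$ is $i$''. Let $\tau_j$ be the last spiking time of neuron $j$ before $0$; by the non-explosion remark these are finite almost surely, and since the $M^j$ are simple point measures $\tau_{i-1},\tau_i,\tau_{i+1}$ are pairwise distinct almost surely. Thus exactly one of the three events $\{\tau_j=\max(\tau_{i-1},\tau_i,\tau_{i+1})\}$, $j\in\{i-1,i,i+1\}$, occurs, so their probabilities are nonnegative and sum to $1$. The reflection symmetry of the model about neuron $i$ (which swaps $i-1$ and $i+1$ and fixes $i$) already gives $\mathbf{P}(\tau_{i-1}\text{ is the max})=\mathbf{P}(\tau_{i+1}\text{ is the max})$. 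Invoking the exchangeability of the triple $(X_\cdot^{i-1},X_\cdot^{i},X_\cdot^{i+1})$ — equivalently, of the spike processes $(N^{i-1},N^i,N^{i+1})$ — in the stationary regime, the joint law of $(\tau_{i-1},\tau_i,\tau_{i+1})$ is invariant under all permutations, so the three probabilities are equal and each equals $\tfrac13$. Combining with the first step yields $\mathbf{P}(X_t^{i}=0)=\tfrac13$.

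The only non-routine point, and hence the main obstacle, is the exchangeability input used in the second step. Translation invariance together with reflection about $i$ gives only $\mathbf{P}(\tau_{i-1}\text{ max})=\mathbf{P}(\tau_{i+1}\text{ max})$, which is not enough by itself: the correlations between the three neurons forbid splitting the event into independent halves, so one must genuinely argue that the joint law of the three spike processes is permutation-invariant. I would obtain this from the symmetry of the perfect-simulation construction under the relevant relabelling of the neurons: the backward clan-of-ancestors procedure and the acceptance/rejection (thinning) rules depend on a neuron only through the interaction graph, which is left invariant by the symmetry, so the constructed stationary law inherits exchangeability of these coordinates; in the Markovian instance it can alternatively be read off from the explicit form of the invariant measure. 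Everything else reduces to the elementary bookkeeping of the first step.
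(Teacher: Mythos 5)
Your first two steps are correct and amount to a more careful version of what the paper's one-line proof intends: using \eqref{eq:1} with $h_{ij}(t)=W\,\mathds{1}_{|i-j|=1}\,h(t)$, $W>0$, $h>0$, the event $\{X_t^{i}=0\}$ coincides (up to null sets) with the event that the last spike before $t$ among $i-1,i,i+1$ is a spike of $i$, and the three ``who spiked last'' events form an almost sure partition, so everything hinges on the equality of their three probabilities. The paper does not prove that equality either: exchangeability is invoked in the statement and simply asserted in the proof, so up to this point you and the paper follow the same route, and you are right to single out the exchangeability of the triple as the only real issue.

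The gap is in your final step, where you claim to obtain exchangeability of $(N^{i-1},N^{i},N^{i+1})$ from invariance of the perfect-simulation construction under relabelling, ``the interaction graph being left invariant by the symmetry''. That is false for the permutations you actually need. A transposition swapping the centre $i$ with an outer neuron, say $i+1$, while fixing all remaining neurons, is not an automorphism of the nearest-neighbour graph on $\mathbb{Z}$: after the swap, neuron $i-1$ would have to interact with $i+1$ and neuron $i+2$ with $i$, i.e.\ with neurons at distance two, so neither the clan-of-ancestors procedure nor the thinning rules are preserved. The automorphism group of this graph consists only of translations and reflections, and these yield exactly what you already conceded is insufficient, namely $\mathbf{P}(\tau_{i-1}\text{ is the max})=\mathbf{P}(\tau_{i+1}\text{ is the max})$, with no information about the centre, which is structurally distinguished: both of its neighbours lie inside the triple, whereas each outer neuron has one external neighbour, so for instance the joint law of $(N^{i-1},N^{i})$ has no symmetry reason to equal that of $(N^{i-1},N^{i+1})$. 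Hence your proposed argument does not close the gap you correctly identified; the exchangeability of the triple is precisely the unproved input of the proposition (the paper treats it as a premise, supported only numerically), and the alternative you mention, reading it off ``from the explicit form of the invariant measure in the Markovian case'', is not available here since no explicit invariant measure is given.
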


\begin{proof}
The neuron $ i $ interacts only with its two nearest neighbors ($i-1$ and $i+1$). Thus, by exchangeability the probability that its potential is $0$ is the probability that it jumps last or $1/3$.
\end{proof}

\begin{theo} \label{theo:val critique}
 We set $\bar \delta= \sup_i \delta_i$ and $\underline{\delta} = \inf_i \delta_i $ where $\delta_i=\frac{\beta_{i* }}{\beta^*_i-\beta_{i*}}. $
There exists a critical value $0<\delta_c < \infty  $ such that:
\begin{enumerate}
\item[-] if $\bar{\delta} < \delta_c, $ then the extinction time is infinite with a positive probability that is, $ \mathbb{P}(\forall i, \ T_{stop}^i = \infty) >0. $
\item[-] if $\underline{\delta } > \delta_c, $ then the extinction time is finite almost surely that is, $ \mathbb{P}( \forall i, \ T_{stop}^i < \infty)=1 $
\end{enumerate}
\end{theo}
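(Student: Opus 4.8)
\textit{Step 1: the clan as a particle system, monotonicity, and reduction to the homogeneous case.} Reading Algorithm~1 backwards in time from $T_0$, the process $(C_t^i)$ is a pure-jump Markov process on the finite subsets of $I$ with exactly two transitions, both read off the a priori Poisson measures $M^{j,s},M^{j,p}$: a \emph{death} $C\mapsto C\setminus\{j\}$ at each $j\in C$ at rate $\beta_{j*}$ (a sure jump of $j$), and a \emph{birth} $C\mapsto C\cup\{j\}$ at each $j\in\partial_{ext}(C)$ at rate $\beta_j^*-\beta_{j*}$ (a possible jump of a neighbour of the clan); by construction $T_{stop}^i=\inf\{t:C_t^i=\emptyset\}$. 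Because the interaction graph has bounded degree (degree $2$ in the nearest-neighbour situation of Proposition~\ref{exchange_prop}), the system is attractive: the graphical construction provides a monotone coupling in the parameters, and since $\delta_i$ is increasing in $\beta_{i*}$ at fixed $\beta_i^*$ — i.e. faster deaths and slower births — one has, for $\delta_i\le\delta_i'$ componentwise, $C_t^i\supseteq C_t^{i,\prime}$ pathwise. It therefore suffices to analyse the homogeneous systems $\xi^\delta$ in which all $\delta_i$ equal a constant $\delta$ (after a time change only $\delta$ matters), and to set
\[
\delta_c:=\sup\{\delta>0:\ \mathbb{P}(\xi^\delta\text{ started from a single site never reaches }\emptyset)>0\},
\]
which is a genuine threshold for $\xi^\delta$ by the monotonicity above. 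The two assertions then follow from $0<\delta_c<\infty$ together with the coupling: if $\bar\delta<\delta_c$ the clan $C_\cdot^i$ dominates $\xi^{\bar\delta}$ and survives with positive probability; if $\underline\delta>\delta_c$ it is dominated by $\xi^{\underline\delta}$ and reaches $\emptyset$ almost surely, and a countable intersection of almost-sure events is almost sure.

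\textit{Step 2: $\delta_c<\infty$ (extinction for large $\delta$).} Follow the size $n_t=|C_t^i|$. A birth raises $n_t$ by $1$, and its total rate is $\sum_{j\in\partial_{ext}(C_t^i)}(\beta_j^*-\beta_{j*})\le 2n_t\sup_j(\beta_j^*-\beta_{j*})$, since each clan site has at most two boundary neighbours; a death lowers $n_t$ by $1$ at total rate $\sum_{j\in C_t^i}\beta_{j*}\ge n_t\inf_j\beta_{j*}$. Hence $(n_t)$ is stochastically dominated by a linear birth--death (continuous-time Galton--Watson) process with per-particle birth rate $b$ and death rate $d$, $b/d\le 2\sup_j(\beta_j^*-\beta_{j*})/\inf_j\beta_{j*}$, which is absorbed at $0$ almost surely as soon as $d\ge b$; in the homogeneous case this is exactly $\delta\ge 2$. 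Thus $\delta_c\le 2<\infty$. (Equivalently, in this regime $\Phi(C)=|C|$ is a non-negative supermartingale for the generator whose downward jump rate is bounded below whenever $\Phi>0$, which forces absorption at $0$.)

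\textit{Step 3: $\delta_c>0$ (survival for small $\delta$).} For $\delta=0$ there are no deaths, the clan is non-decreasing and survives deterministically; for $\delta$ small one runs a Peierls-type contour argument in the space--time graphical representation, in the spirit of Ferrari et al.~\cite{FGGL} and Griffeath~\cite{Gri}. Extinction of the clan issued from $(i,T_0)$ forces, in the space--time diagram, a closed contour of ``death marks'' of some length $\ell$ separating $(i,T_0)$ from infinity; the number of candidate contours of length $\ell$ through a fixed point is at most exponential in $\ell$, while the probability that a given contour is realised is exponentially small in $\ell$ provided the death intensity is small relative to the birth intensity, i.e. provided $\delta$ lies below an explicit positive constant. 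Summing the resulting geometric series gives $\mathbb{P}(\text{extinction})<1$, hence $\delta_c>0$. (Alternatively one renormalises $\xi^\delta$ into space--time blocks and compares with supercritical oriented percolation, in the manner of Bezuidenhout and Grimmett.)

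\textit{Step 4: conclusion and main difficulty.} Steps~2--3 give $0<\delta_c<\infty$, and Step~1 converts this into the two stated dichotomies; for the positive-probability statement one uses in addition that the events $\{T_{stop}^i=\infty\}$ are increasing in the measures $M^{j,p}$, decreasing in the $M^{j,s}$, and translation invariant, so that positive probability for one neuron propagates to the simultaneous event. The real work is concentrated in Step~3: making the contour (or block) estimate quantitative is the main obstacle, and it is also there that the precise value of $\delta_c$ — and, for a non-homogeneous network, its expression through $\underline\delta$ and $\bar\delta$ rather than through the separate extrema of the rates — has to be pinned down.
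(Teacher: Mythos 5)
Your outline takes essentially the same route as the paper, whose own proof is only a two-line reduction to Theorem 10 of \cite{Gon}: the almost-sure-extinction half ($\underline{\delta}>\delta_c$) by dominating the clan size with a linear birth--death (branching) process --- the paper's $Z_t$ with generator $Af(n)=n\,[(f(n+1)-f(n))+\underline{\delta}\,(f(n-1)-f(n))]$, which is your Step~2 up to normalization --- and the survival half ($\bar{\delta}<\delta_c$) by the Griffeath/Ferrari-et-al.\ contour technique \cite{Gri,FGGL} inherited from \cite{Gon}, with monotonicity yielding the single threshold $\delta_c$. The delicate point you flag in Steps~1 and~4 (monotonicity and the comparison being stated through $\underline{\delta},\bar{\delta}$, i.e.\ extrema of the ratios rather than ratios of the extremal rates) is likewise left untreated by the paper's citation-style proof, so your sketch is at the same level of completeness as the published argument.
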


\begin{proof}
The proof of this theorem is almost the same as that of Theorem 10 of \cite{Gon}. 
In the first part it will be necessary to replace $\delta$ by $\bar{\delta}$. \\
In the second part, with a rate $ n $ the transition of the branching process $ Z_t $ is from $ n $ to $ n+1 $ and with a rate $ n \underline{\delta} $ this transition is from $ n $ to $ n-1. $ 
The associated infinitesimal generator of  $Z_t$ for any bounded test function $f$ is: $$Af(n)= n [(f(n+1)-f(n))+ \underline{\delta} (f(n-1)-f(n))].$$

\end{proof}

\section{Example}

 \begin{ass}

1) We consider that any neuron $i$ interacts with its nearest neighbors, $i-1$ and $i+1$. The non-zero synaptic weights are such that, for any $(i,j) \in \mathds{Z}^{2}$,
      \begin{equation*} 
         h_{i j}(t) = W \times \mathds{1}_{|i-j| = 1}  h(t)
    \end{equation*} 
    where $W$ is a positive constant and $h$ is a positive function.
    
 2)   The spiking rate functions $\beta: \mathbb{R}_+ \to \mathbb{R}_{+}$ is the same for all neurons. It is decreasing and bounded on $\mathbb{R}_+$, that is, for any $x >0$,
    
    \begin{equation*} 
        0 < \beta_{*} < \beta(x) < \beta^{*} < \infty . 
    \end{equation*} 
\end{ass}

\subsection{Algorithm description}
    
 The algorithm code is object-oriented with two classes, one called neuron and the other called jump. The following table provides the data structure associated with the jumps object class.
    
    \renewcommand{\arraystretch}{1.1}
        \newcolumntype{M}[1]{>{\centering\arraybackslash}m{#1}}
        \begin{table}[h]
        \label{jump_table}
            \begin{center}
                \begin{tabular}{|M{4cm}|M{12cm}|}
                    \hline
                    Component & Meaning 
                    \tabularnewline
                    \hline
                    $j \in \mathds{N^{*}}$ & Index of the jump.
                    \tabularnewline
                    \hline
                    $i \in \mathds{Z}$ & Index of the neuron that produced the jump
                    \tabularnewline
                    \hline
                    $t_{j} \in \mathds{R_{-}}$ & Time occurrence of the jump.
                    \tabularnewline
                    \hline
                    $U_{j} \in [0,1]$ & Uniform random variable 
                    \tabularnewline
                    \hline
                    $\mathds{1}_{j} \in \left\{0,1\right\}$ & Indicate if the jump is sure
                    \tabularnewline
                    \hline
                 \end{tabular}
            \end{center}
            \caption{Data structure of the jumps object class} 
        \end{table}
        
    With these notations we can go into details of the algorithm. The code successively performs the backward procedure \ref{alg: backward} and the forward procedure \ref{alg: forward}. 
 
  \clearpage 
   
    \begin{algorithm}
        \caption{Backward procedure} 
        \label{alg: backward}    
        \begin{algorithmic}[1]
        
            \LeftComment{\textbf{Initialization}}
            \State Initialize $J$, the list of jumps as $\emptyset$
            \State Initialize $C$, the clan of ancestors as a list that only includes neuron $0$
            \State Initialize $S$, simulated neurons as a list that includes neuron $0$ and its neighbors $-1$ and $1$ 
            \State Initialize the number of jumps $j = 0$ 
            \State Initialize the time $T_{0} = 0$ \vspace{0.2cm}
            
            \LeftComment{\textbf{Building the list of jumps $J$ that impacted the state of the neuron $0$ at time $T_0$}}  
 \vspace{0.1cm}
            \While{$C \ne \emptyset$}
                \State $j = j + 1$ 
                \State Crate a new jump with index $j$ 
                \State Draw the neuron $i$ that produced the jump, uniformly picked among $S$
                \State Draw an exponential random variable to determine the jump time, $t_{j} = t_{j-1} - \mathcal{Exp}(|S| \times \beta^{*})$
                \State Draw the uniform random variable $U_{j} \in [0,1]$
                \If{$U_{j} < \frac{\beta_{*}}{\beta^{*}}$}
                    \State The jump is sure, $\mathds{1}_{j} = 1$
                    \If{$\left\{ \text{neuron } i \right\} \subset C$, neuron $i$ is in the clan of ancestors,}
                        \State the neuron $i$ is removed from the clan of ancestors, $C = C \setminus \left\{ \text{neuron } i \right\}$
                        \If{$\left\{ \text{neuron } i-1 \right\} \not\subset C$ \textbf{and} $\left\{ \text{neuron } i-2 \right\} \not\subset C$}
                            \State The neuron $i-1$ is removed from the simulated neurons, $S = S \setminus \left\{ \text{neuron } i-1 \right\}$
                        \EndIf  \vspace{0.1cm}
                        \If{$\left\{ \text{neuron } i+1 \right\} \not\subset C$ \textbf{and} $\left\{ \text{neuron } i+2 \right\} \not\subset C$}
                            \State The neuron $i+1$ is removed from the simulated neurons, $S = S \setminus \left\{ \text{neuron } i+1 \right\}$
                        \EndIf  \vspace{0.1cm}
                    \EndIf  \vspace{0.1cm}
                \Else{ ($U_{j} \geq \frac{\beta_{*}}{\beta^{*}}$) \textbf{then}}
                    \State The jump is a candidate jump, $\mathds{1}_{j} = 0$
                    \If{$\left\{ \text{neuron } i \right\} \not\subset C$, neuron $i$ is not yet in the clan of ancestors,}
                        \State the neuron $i$ is append to the clan of ancestors, $C = C \cup \left\{ \text{neuron } i \right\}$
                        \If{$\left\{ \text{neuron } i \right\} \not\subset S$, neuron $i$ is not yet simulated,}
                            \State the neuron $i$ is append to the simulated neurons, $S = S \cup \left\{ \text{neuron } i \right\}$
                        \EndIf
                        \If{$\left\{ \text{neuron } i-1 \right\} \not\subset S$, neuron $i-1$ is not yet simulated,}
                            \State the neuron $i-1$ is append to the simulated neurons, $S = S \cup \left\{ \text{neuron } i-1 \right\}$
                        \EndIf  \vspace{0.1cm}
                        \If{$\left\{ \text{neuron } i+1 \right\} \not\subset S$, neuron $i+1$ is not yet simulated,}
                            \State the neuron $i+1$ is append to the simulated neurons, $S = S \cup \left\{ \text{neuron } i+1 \right\}$
                        \EndIf  \vspace{0.1cm}
                    \EndIf  \vspace{0.1cm}
                \EndIf 
                \State The jump $j$ is append to $J$ with its attributes $i$, $t_{j}$, $U_{j}$, $\mathds{1}_{j}$,  
            \EndWhile
        \end{algorithmic} 
    \end{algorithm}
    
\begin{algorithm}
        \caption{Forward procedure} 
        \label{alg: forward}    
        \begin{algorithmic}[1]
        
            \LeftComment{\textbf{Initialization}}
            \State $J$ is the list of jumps built during the backward phase and sorted in increasing time order
            \State $S$, is a list that includes neuron $0$ and all the other neurons that has at least one jump in $J$
            \For{each neuron $i \in S$}
                \State Initialize its list of its pre-synaptic jumps, $P_{i}$ = $\emptyset$ 
            \EndFor
            \LeftComment{\textbf{Compute the state of the neuron $0$ at time $T_0$}}
            \For{each jump $j \in J$}
                \State Identify $i$, the index of the neuron that produced the jump $j$
                \If {$\mathds{1}_{j} = 0$, the jump $j$ is not sure}
                    \State Compute the state $x^{i}_{t_{j}}$ of the neuron $i$ at time $t_{j}$, $x^{i}_{t_{j}} = \sum_{j'\in P_{i} } W \times h(t_{j} - t_{j'})$
                    \If {$\frac{\beta(x)}{\beta^{*}} \geq U_{j}$, the firing rate is enough to produce the jump $j$,}
                        \State Set the jump $j$ as sure, $\mathds{1}_{j} = 1$
                    \EndIf 
                \EndIf 
                \If {$\mathds{1}_{j} = 1$, the jump $j$ is sure}
                    \State The neuron $i$ reset, erase all its pre-synaptic jumps, $P_{i} = \emptyset$
                    \State The neuron $i-1$ receives a pre-synaptic spike, append the jump $j$ to $P_{i-1}$
                    \State The neuron $i+1$ receives a pre-synaptic spike, append the jump $j$ to $P_{i+1}$
                \EndIf
            \EndFor
            \State Compute the state $x^{0}_{T_0}$ of the neuron $0$ at time $T_0$, $x^{0}_{T_0} = \sum_{j'\in P_{0} } W \times h(T_0- t_{j'})$

        \end{algorithmic} 
\end{algorithm}  
  
    
\subsection{Results}
    
    We simulate, with the algorithm described above $N=100000$ values for the inhibition state. We then estimate non parametrically the distribution of the inhibition state Figure \ref{fig: empirical_dist}.

    \begin{figure}[h!]
        \centering
        \includegraphics[scale=0.52, height=4.5cm, width=7cm]{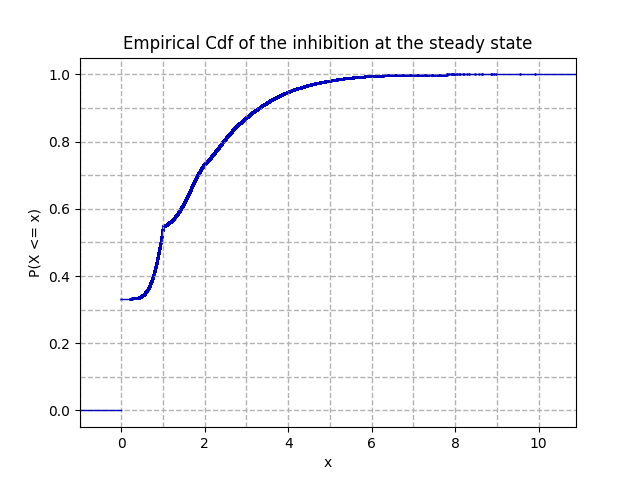}
        \includegraphics[scale=0.52, height=4.5cm, width=7cm]{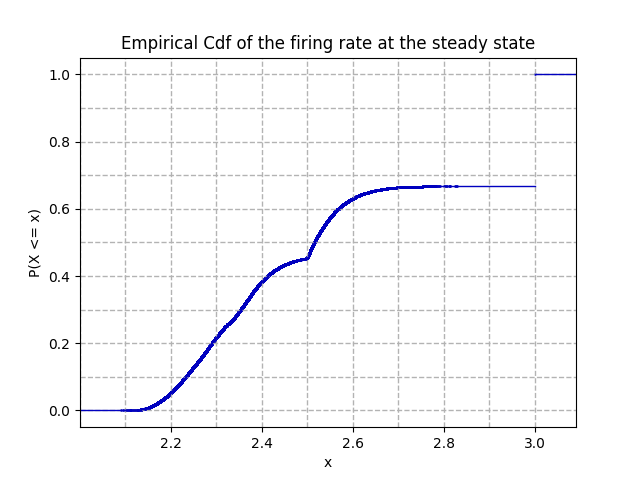}
        \caption{The function $\beta(x) = \frac{\beta^{*}-x\beta_{*}}{x+1}$ returns the firing rate (distribution on the right-hand graph) from the inhibition state (distribution on the left-hand graph). $\beta(x)$ is always between $\beta_{*}$ and $\beta^{*}$ here equal to $2Hz$ and $3Hz$. A firing rate equals to $\beta^{*}$ corresponds to an inhibition state equals to $0$ and a firing rate equals to $\beta_{*}$ corresponds to an inhibition state equals to $+\infty$. The maximum inhibition observed in the $N=100000$ replicates is about $10$. This corresponds to at least as much presynaptic spikes since the value $W$ of the synaptic weights is set to $1$.}
        \label{fig: empirical_dist}
    \end{figure}

    The distribution has irregularities, especially for low inhibition values. We assume that the irregularities arise from the number of presynaptic jumps that the neuron received between its last reset and the time $0$. The distribution of the number of presynaptic jump received Figure \ref{fig: n_jumps} is obviously discrete. However, when the number of pre-synaptic jumps is large, the integral against the interaction function $h$ provides regularity. The distributions shown in Figure \ref{fig: empirical_dist_lamb} are established with the interaction function $h(t) = (1+t)^{-\lambda}$ where the parameter $\lambda = 2$. We can reduce the regularisation effect by using a smaller values for $\lambda$. \\
    
    \begin{figure}[h!]
        \centering
        \includegraphics[scale=0.52, height=5cm, width=7cm]{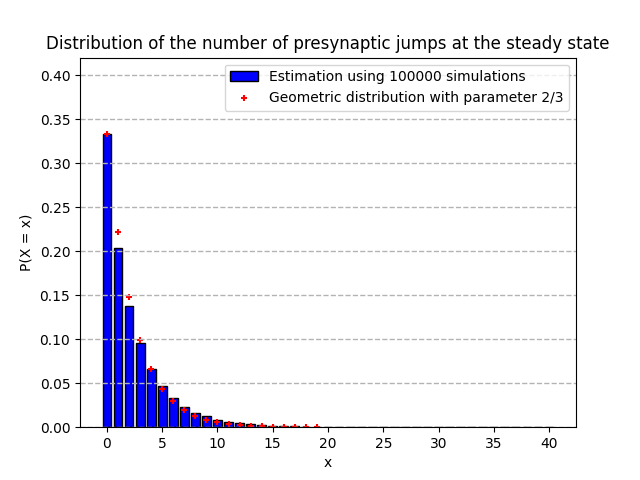}
        \caption{For small values, the number of presynaptic spikes is close to a geometric distribution with parameter $\frac{2}{3}$. In particular, the probability that the neuron $0$ has not received any presynaptic spikes at time $0$ is $\frac{1}{3}$ as predicted by the proposition \ref{exchange_prop}. However, the number of spikes has a thicker distribution tail. The maximum number of presynaptic spikes observed on the $N=100000$ replicates is equal to $42$. The geometric distribution returns this realization with a probability equals to $1.34 \times 10^{-8}$ much lower than $\frac{1}{N}$. This is the inhibition effect. When the neuron $0$ receives many presynaptic spikes, its inhibition increases. It tends to activate less than its neighbors and receives even more presynaptic spikes before resetting.}
        \label{fig: n_jumps}
    \end{figure}
    
    \begin{figure}[h!]
        \centering
        \includegraphics[scale=0.52, height=5cm, width=7cm]{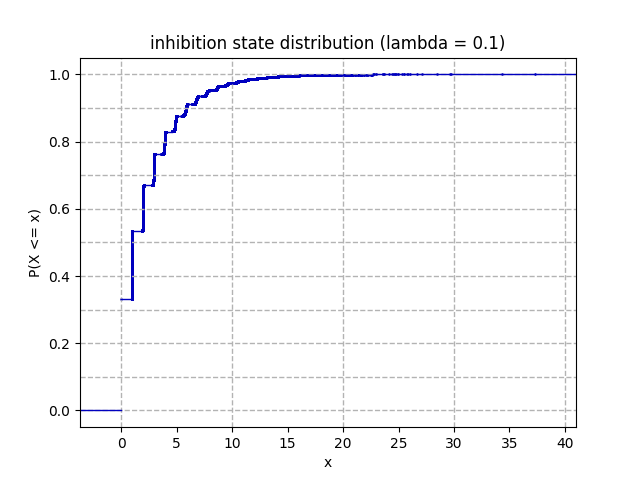}
        \includegraphics[scale=0.52, height=5cm, width=7cm]{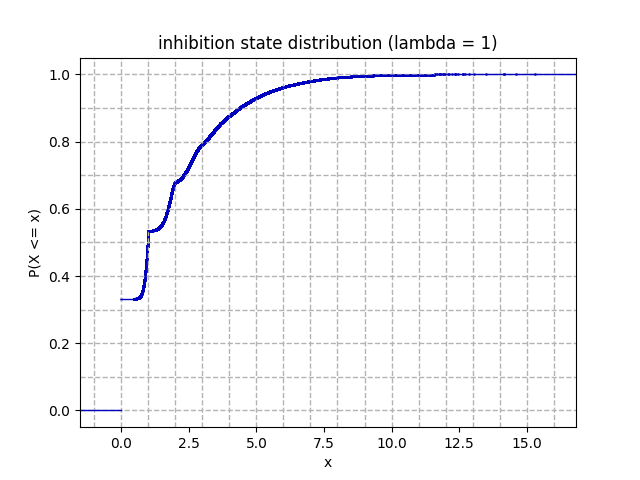}
        \caption{}
        \label{fig: empirical_dist_lamb}
    \end{figure}
    The interaction function $h$ is involved in the short-term memory loss. The parameter $\lambda$ regulates how quickly the neuron loses its memory when it is not firing. If $\lambda$ is small, the inhibition state provides information almost equivalent to the number of presynaptic spikes. With $\lambda = 0.1$, the probable inhibition states are between $0$ and $40$ almost like the range of the number of observed interactions. The inhibition state is actually very close to the sum of the counting processes associated with presynaptic neurons. On the other hand, the inhibition state gives very little information about the timing of past interactions. The larger the value of $\lambda$, the more the interactions are modulated according to their distance with time $0$. On the other hand, it becomes impossible to determine whether the high inhibition states are due to many distant interactions or fewer interactions close to the time $0$. This is what provides regularity to the distribution for high inhibition states. \\

\textbf{Acknowledgments:}\\
We thank Eva L\"{o}cherbach for the stimulating discussions on Hawkes processes and chains with long memory. This research was conducted within the part of the Labex MME-DII(ANR11-LBX-0023-01)  project and the CY Initiative of Excellence (grant "Investissements d'Avenir" ANR-16-IDEX-0008), Project EcoDep PSI-AAP 202-00000013


\begin{thebibliography}{99}

%

\bibitem{BM} Brémaud, P. and Massoulié, L. (1996). Stability of nonlinear Hawkes processes. The Annals of Probability, 1563-1588.

\bibitem{CFF} Comets, F., Fernandez, R., Ferrari, P. A. (2002). Processes with long memory: Regenerative construction and perfect simulation. Ann. of Appl. Probab., 12, No 3, 921–943.

\bibitem{DV-J} Daley, D. J. and Vere-Jones, D. (2003). An Introduction to the Theory of Point Processes.
Vol. I: Elementary Theory and Methods, 2nd ed. Springer, New York. MR1950431

\bibitem{DFH} Delattre, S., Fournier, N., Hoffmann, M. (2016). Hawkes processes on large networks. The Annals of Applied Probability, 26(1), 216-261.

\bibitem{FGGL}  Ferrari, P. A., Galves, A.,   Grigorescu, I.,   L\"{o}cherbach, E. (2018). Phase Transition for Infinite Systems of Spiking Neurons. Journal of Statistical Physics , 172:1564–1575 DOI 10.1007/s10955-018-2118-6



\bibitem{GGLO} Galves A.,  Garcia N. L.,  Löcherbach E. and Orlandi E. (2012). Kalikow-type decomposition for multicolor infinite range particle systems. The Annals of Applied Probability, 23(4), 1629–1659. 

\bibitem{GL} Galves, A. and Löcherbach, E. (2013). Infinite systems of interacting chains with memory of variable length - a stochastic model for biological neural nets. Journal of Statistical Physics 151 896–921.

\bibitem{GLO} Galves, A., L\"{o}cherbach, E., Orlandi, E.: Perfect simulation of infinite range Gibbs measures and coupling with their finite range approximations. J Stat Phys DOI 10.1007/s10955-009-9881-3 (2009)

\bibitem{Gon} Goncalves, B. (2021). An Interacting Neuronal Network with Inhibition: theoretical analysis and perfect simulation. arXiv preprint arXiv:2110.06714. 

\bibitem{Gri} Griffeath, D. (1981). The basic contact process. Stoch. Proc. Appl. 11, 151-185 

\bibitem{Hawkes} A. G. Hawkes. (1971). Point spectra of some mutually exciting point processes. J.
R. Stat. Soc., Ser. B, 33:438-443. 

\bibitem{HL} Hodara, P. and Löcherbach, E. (2017). Hawkes processes with variable length memory and an infinite number of components. Advances in Applied Probability, 49(1), 84-107.

\bibitem{Mas} Massoulié, L. (1998). Stability results for a general class of interacting point processes dynamics, and applications. Stochastic Process. Appl. 75 1–30. MR1629010

\end{thebibliography}
\end{document}